\newtheorem{thm}{Theorem}%[section]
\newtheorem{cor}[thm]{Corollary}
\theoremstyle{definition}
\newtheorem{rem}[thm]{Remark}
\newtheorem{ques}[thm]{Question}
\newtheorem*{claim*}{Claim}
\numberwithin{equation}{thm}
\def\Ext{\mathsf{Ext}}
\def\syz{\mathsf{\Omega}}
\def\tr{\mathsf{Tr}}
\def\cm{\mathsf{CM}}
\def\lcm{\mathsf{\underline{CM}}}
\def\db{\mathsf{D^b}}
\def\ds{\mathsf{D_{sg}}}
\def\lm{\operatorname{\mathsf{\underline{mod}}}}
\def\Hom{\mathsf{Hom}}
\def\id{\mathsf{I}}
\def\mod{\operatorname{\mathsf{mod}}}
\def\lhom{\mathsf{\underline{Hom}}}
\begin{document}
\allowdisplaybreaks
\setlength{\baselineskip}{15pt}
\title[Endofunctors of singularity categories]{Endofunctors of singularity categories characterizing Gorenstein rings}
%\date{\today}
\author{Takuma Aihara}
\address{Graduate School of Mathematics, Nagoya University, Furocho, Chikusaku, Nagoya, Aichi 464-8602, Japan}
\email{aihara.takuma@math.nagoya-u.ac.jp}
\author{Ryo Takahashi}
\address{Graduate School of Mathematics, Nagoya University, Furocho, Chikusaku, Nagoya, Aichi 464-8602, Japan}
\email{takahashi@math.nagoya-u.ac.jp}
\urladdr{http://www.math.nagoya-u.ac.jp/~takahashi/}
\thanks{2010 {\em Mathematics Subject Classification.} 13D09, 13H10, 18E30}
\thanks{{\em Key words and phrases.} singularity category, stable category of Cohen--Macaulay modules, Gorenstein ring, Cohen--Macaulay ring}
\thanks{The second author was partially supported by JSPS Grant-in-Aid for Scientific Research (C) 25400038}
%\dedicatory{Dedicated to Professor Craig Huneke on the occasion of his sixtieth birthday}
\begin{abstract}
In this paper, we prove that certain contravariant endofunctors of singularity categories characterize Gorenstein rings.
\end{abstract}
\maketitle
%\tableofcontents
%%%%%%%%%%%%%%%%%%%%%%%%%%%%%%%%%%%%%%%%%%%%%%%%%%%%%%%%

Let $\Lambda$ be a noetherian ring.
Denote by $\ds(\Lambda)$ the {\em singularity category} of $\Lambda$, that is, the Verdier quotient of the bounded derived category $\db(\Lambda)$ of finitely generated (right) $\Lambda$-modules by the full subcategory consisting of bounded complexes of finitely generated projective $\Lambda$-modules.
We are interested in the following question.

\begin{ques}\label{1}
What contravariant endofunctor of $\ds(\Lambda)$ characterizes the Iwanaga--Gorenstein property of $\Lambda$?
\end{ques}

In this paper we shall consider this question in the case where $\Lambda$ is commutative and Cohen--Macaulay.

Let $R$ be a commutative Cohen--Macaulay local ring of Krull dimension $d$.
Denote by $\cm(R)$ the category of (maximal) Cohen--Macaulay $R$-modules and by $\lcm(R)$ its {\em stable category}\,: the objects of $\lcm(R)$ are the Cohen--Macaulay $R$-modules, and the hom-set $\Hom_{\lcm(R)}(M,N)$ is defined as $\lhom_R(M,N)$, the quotient module of $\Hom_R(M,N)$ by the submodule consisting homomorphisms factoring through finitely generated projective (or equivalently, free) $R$-modules.
The natural full embedding functor $\cm(R)\to\db(R)$ induces an additive covariant functor
$$
\eta:\lcm(R)\to\ds(R).
$$
Furthermore, the assignment $M\mapsto\syz^d\tr M$, where $\syz$ and $\tr$ stand for the syzygy and transpose functors respectively (see \cite[Chapter 2, \S1]{AB} for details of the functors $\syz$ and $\tr$), makes an additive contravariant functor
$$
\lambda:\lcm(R)\to\lcm(R).
$$

The following result gives a partial answer to Question \ref{1}.

\begin{thm}
The following are equivalent.
\begin{enumerate}[\rm(1)]
\item
The ring $R$ is Gorenstein.
\item
The functor $\eta$ is an equivalence (i.e. $\eta$ is full, faithful and dense).
\item
There exists a functor $\phi:\ds(R)\to\ds(R)$ such that the diagram
$$
\begin{CD}
\ds(R) @>\phi>> \ds(R) \\
@A{\eta}AA @AA{\eta}A \\
\lcm(R) @>>\lambda> \lcm(R)
\end{CD}
$$
of functors commutes up to isomorphism.
\end{enumerate}
\end{thm}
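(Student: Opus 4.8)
The plan is to prove the cyclic chain of implications $(1)\Rightarrow(2)\Rightarrow(3)\Rightarrow(1)$. Here $(1)\Rightarrow(2)$ is Buchweitz's theorem, $(2)\Rightarrow(3)$ is formal, and the whole weight of the statement rests on the implication $(3)\Rightarrow(1)$, which I expect to be the main obstacle.

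For $(1)\Rightarrow(2)$ I would recall that over a Gorenstein local ring the maximal Cohen--Macaulay modules are precisely the totally reflexive (Gorenstein projective) modules; these form a Frobenius exact category whose projective-injective objects are exactly the free modules, so $\lcm(R)$ is triangulated and Buchweitz's theorem identifies $\eta$ as a triangle equivalence $\lcm(R)\to\ds(R)$. In particular $\eta$ is full, faithful and dense. Granting $(2)$, the implication $(2)\Rightarrow(3)$ is immediate: choosing a quasi-inverse $\eta^{-1}$ of $\eta$ and setting $\phi:=\eta\circ\lambda\circ\eta^{-1}$ produces a functor with $\phi\circ\eta\cong\eta\circ\lambda\circ\eta^{-1}\circ\eta\cong\eta\circ\lambda$, so the square commutes up to isomorphism.

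The core is $(3)\Rightarrow(1)$, and two structural facts drive the argument. First, for any finitely generated module $M$ there is a stable isomorphism $\syz^2\tr M\cong\Hom_R(M,R)$, obtained by dualizing a projective presentation of $M$; hence $\eta\lambda M=\eta(\syz^d\tr M)$ is, up to the shift coming from the identification of syzygy with $[-1]$ in $\ds(R)$, the class of the $R$-dual of $M$. Thus the bottom functor $\lambda$ is a manifestation inside $\lcm(R)$ of $R$-linear duality, and condition $(3)$ asserts that this duality is realized by a genuine endofunctor $\phi$ of $\ds(R)$. Second, since syzygies of Cohen--Macaulay modules are again Cohen--Macaulay and $\syz M\cong M[-1]$ in $\ds(R)$, the essential image $\mathcal I$ of $\eta$ is closed under the shift $[-1]$; the Gorenstein property is exactly the statement that $\mathcal I$ is in addition closed under $[1]$, equivalently that cosyzygies of Cohen--Macaulay modules are Cohen--Macaulay. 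The plan is to evaluate the commuting square on the $d$-th syzygy $\syz^d k$ of the residue field, whose image in $\ds(R)$ is $k[-d]$, and to use the compatibility $\phi\eta\cong\eta\lambda$ together with the duality interpretation above to force the modules $\Ext_R^i(k,R)$ to vanish for $i>d$. By Bass's characterization this says $R$ has finite injective dimension, that is, $R$ is Gorenstein.

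I expect the hard part to be exactly this last step: squeezing a finiteness statement (vanishing of higher Bass numbers, or closure of $\mathcal I$ under cosyzygy) out of the bare existence of $\phi$, which is assumed only to be a functor compatible with $\lambda$ along $\eta$, with no exactness hypothesis. The delicate points will be controlling the morphism spaces $\Hom_{\ds(R)}(\eta M,\eta N)$ against $\lhom_R(M,N)$---that is, the failure of $\eta$ to be full or faithful when $R$ is not Gorenstein---and ensuring that a functor $\phi$ defined on all of $\ds(R)$ cannot exist unless the dualizing behaviour of $\lambda$ is compatible with the triangulated shift on the nose. It is precisely the tension between $\mathcal I$ being automatically $\syz$-closed but not a priori cosyzygy-closed that $\phi$ is forced to resolve, and resolving it should be possible only in the Gorenstein case.
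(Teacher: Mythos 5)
Your treatment of $(1)\Rightarrow(2)$ and $(2)\Rightarrow(3)$ is correct and coincides with the paper's (Buchweitz, then $\phi:=\eta\lambda\eta^{-1}$). But the implication $(3)\Rightarrow(1)$ --- which you yourself identify as carrying the whole weight of the theorem --- is not actually proved in your proposal: you choose the right test object $\syz^dk$ and the right target (vanishing of higher Ext against $R$, hence Gorensteinness by Bass), but then explicitly defer the mechanism by which the bare existence of $\phi$ forces any vanishing ("I expect the hard part to be exactly this last step"). That mechanism is the entire mathematical content of the implication, so this is a genuine gap, not a stylistic difference. Moreover, you anticipate that the lever will be an analysis of the comparison $\Hom_{\ds(R)}(\eta M,\eta N)$ versus $\lhom_R(M,N)$, i.e.\ of the failure of fullness/faithfulness of $\eta$; that is not where the argument lives.

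The idea that fills the gap in the paper is the following. By Auslander--Bridger there is an exact sequence $0\to F\to\tr\syz\tr\syz M\xrightarrow{f}M\to0$ with $F$ free, giving a natural transformation $\Theta:\tr\syz\tr\syz\to\id$ on $\lcm(R)$. Since the kernel of $f$ is free, $\eta(\Theta M)$ is an isomorphism in $\ds(R)$; since \emph{any} functor --- with no exactness or triangulated-structure hypotheses whatsoever --- preserves isomorphisms, $\phi\eta(\Theta M)$ is an isomorphism, and naturality of the assumed isomorphism $\Delta:\phi\eta\to\eta\lambda$ then makes $\eta\lambda(\Theta M)$ an isomorphism as well. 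On the other hand, dualizing the sequence above and taking $d$-th syzygies (via the horseshoe lemma, and using $\Ext_R^1(\tr\syz\tr\syz M,R)=0$) produces a short exact sequence $0\to\syz^d\Ext_R^1(M,R)\to\lambda M\to\lambda(\tr\syz\tr\syz M)\to0$ of Cohen--Macaulay modules whose second map represents $\lambda\pi(f)$ stably; $\eta$ sends it to an exact triangle, and invertibility of the map in that triangle forces $\eta\syz^d\Ext_R^1(M,R)\cong0$, i.e.\ $\Ext_R^1(M,R)$ has finite projective dimension. Taking $M=\syz^dk$ gives that $\Ext_R^{d+1}(k,R)$ has finite projective dimension --- note this is weaker than the outright vanishing you aimed for, and a final dichotomy is still needed: if $\Ext_R^{d+1}(k,R)=0$ then $R$ is Gorenstein, while if it is nonzero then $k$ has finite projective dimension, so $R$ is regular, contradicting $\Ext_R^{d+1}(k,R)\ne0$. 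So the tension you correctly sensed (the image of $\eta$ is syzygy-closed but not a priori cosyzygy-closed) is resolved not by controlling Hom-spaces, but by exhibiting one map that $\eta$ inverts while its image under $\lambda$ has a kernel detecting $\Ext^1(-,R)$, and then invoking the elementary fact that functors preserve isomorphisms. That is the missing idea.
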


\begin{proof}
(1) $\Rightarrow$ (2):
If $R$ is Gorenstein, then a celebrated theorem of Buchweitz \cite[Theorem 4.4.1]{B} implies that the functor $\eta$ is an equivalence.

(2) $\Rightarrow$ (3):
When $\eta$ is an equivalence, we have a contravariant endofunctor
$$
\phi=\eta\lambda\rho:\ds(R)\to\ds(R).
$$
of $\ds(R)$, where $\rho$ stands for a quasi-inverse of $\eta$.
Condition (3) holds for this functor $\phi$.

(3) $\Rightarrow$ (1):
In the remainder of the proof, we will omit writing free summands.
Let
$$
\pi:\mod R\to\lm R
$$
be the canonical functor from the category of finitely generated $R$-modules to its stable category, that is, the objects of $\lm R$ are the finitely generated $R$-modules and the hom-set $\Hom_{\lm R}(M,N)$ is defined as $\lhom_R(M,N)$.

Assume that there are a contravariant functor $\phi:\ds(R)\to\ds(R)$ and an isomorphism
$$
\Delta:\phi\eta\to\eta\lambda
$$
of functors from $\lcm(R)$ to $\ds(R)$.
Take a Cohen--Macaulay $R$-module $M$.
It follows from \cite[Proposition (2.21)]{AB} that there exists an exact sequence
\begin{equation}\label{2}
0 \to F \to \tr\syz\tr\syz M \xrightarrow{f} M \to 0
\end{equation}
of finitely generated $R$-modules with $F$ free.
The map $f$ induces a morphism
$$
\Theta:\tr\syz\tr\syz\to\id
$$
of functors from $\lcm(R)$ to $\lcm(R)$, where $\id$ stands for the identity functor.
Applying the $R$-dual functor $(-)^*=\Hom_R(-,R)$ to \eqref{2} gives an exact sequence
$$
0 \to M^* \xrightarrow{f^*} (\tr\syz\tr\syz M)^* \xrightarrow{g} F^* \to \tr M \xrightarrow{h} \tr(\tr\syz\tr\syz M) \to \tr F \to 0
$$
with $\pi(h)=\tr\pi(f)$; see \cite[Lemma (3.9)]{AB}.
Note that there is also an exact sequence
$$
0 \to M^* \xrightarrow{f^*} (\tr\syz\tr\syz M)^* \xrightarrow{g} F^* \to \Ext_R^1(M,R) \to \Ext_R^1(\tr\syz\tr\syz M,R).
$$
Since $\Ext_R^1(\tr\syz\tr\syz M,R)=0$ by \cite[Theorem (2.17)]{AB} and since $\tr F$ is free, we obtain an exact sequence
$$
0 \to \Ext_R^1(M,R) \to \tr M \xrightarrow{h'} \tr(\tr\syz\tr\syz M) \to 0
$$
such that $\pi(h')=\pi(h)$.
Taking the $d$-th syzygies of $\Ext_R^1(M,R)$ and $\tr(\tr\syz\tr\syz M)$ and using the horseshoe lemma, we get an exact sequence of Cohen--Macaulay $R$-modules
\begin{equation}\label{3}
0 \to \syz^d\Ext_R^1(M,R) \to \lambda M \xrightarrow{\ell} \lambda(\tr\syz\tr\syz M) \to 0
\end{equation}
with $\pi(\ell)=\lambda\pi(f)$.
Note that for each short exact sequence $\sigma:0\to X\xrightarrow{\alpha}Y\xrightarrow{\beta}Z\to0$ of Cohen--Macaulay $R$-modules, the image of $\sigma$ by the canonical functor $\pi$ is sent by $\eta$ to an exact triangle $X\xrightarrow{\alpha}Y\xrightarrow{\beta}Z\rightsquigarrow$ in $\ds(R)$.
Hence, $\eta$ sends \eqref{3} to an exact triangle
$$
\eta\syz^d\Ext_R^1(M,R) \to \eta\lambda M \xrightarrow{\eta\lambda(\Theta M)} \eta\lambda(\tr\syz\tr\syz M) \rightsquigarrow
$$
in $\ds(R)$.
We have a commutative diagram
$$
\begin{CD}
\eta\lambda M @>{\eta\lambda(\Theta M)}>> \eta\lambda(\tr\syz\tr\syz M) \\
@A{\Delta M}A{\cong}A @A{\cong}A{\Delta(\tr\syz\tr\syz M)}A \\
\phi\eta M @>>{\phi\eta(\Theta M)}> \phi\eta(\tr\syz\tr\syz M)
\end{CD}
$$
of morphisms in $\ds(R)$, and the exact sequence \eqref{2} induces an isomorphism
$$
\eta(\Theta M):\eta(\tr\syz\tr\syz M) \to \eta M
$$
in $\ds(R)$.
Therefore $\eta\syz^d\Ext_R^1(M,R)$ is isomorphic to $0$ in $\ds(R)$, which means that the $R$-module $\Ext_R^1(M,R)$ has finite projective dimension.
Thus, letting $M:=\syz^dk$, where $k$ denotes the residue field of $R$, shows that $\Ext_R^{d+1}(k,R)$ has finite projective dimension.
If $\Ext_R^{d+1}(k,R)=0$, then $R$ is Gorenstein.
If $\Ext_R^{d+1}(k,R)\ne0$, then the $R$-module $k$ has finite projective dimension, which implies that $R$ is regular, so that $\Ext_R^{d+1}(k,R)=0$, a contradiction.
Consequently, in either case $R$ is a Gorenstein ring.

Now the proof of the theorem is completed.
\end{proof}

\begin{rem}
In the proof of the theorem, the assumption that the ring $R$ is commutative is used to deduce the Gorensteinness of $R$ from the fact that the $R$-module $\Ext_R^{d+1}(k,R)$ has finite projective dimension.
For a noncommutative ring $\Lambda$ with Jacobson radical $J$ and an integer $n$, the $n$-th Ext group $\Ext_\Lambda^n(\Lambda/J,\Lambda)$ of the right $\Lambda$-modules $\Lambda/J$ and $\Lambda$ is not necessarily semisimple as a left $\Lambda$-module.
\end{rem}

We end this paper by stating a direct consequence of the theorem.

\begin{cor}
Suppose that $R$ is artinian.
Then $R$ is Gorenstein if and only if the transpose functor $\tr:\lm R\to\lm R$ extends to the singularity category $\ds(R)$.
\end{cor}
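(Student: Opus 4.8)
The plan is to obtain the Corollary as a direct specialization of the Theorem to the zero-dimensional case. The first step would be to record that an artinian local ring $R$ has Krull dimension $d=0$, so that every finitely generated $R$-module is maximal Cohen--Macaulay. This gives the identifications $\cm(R)=\mod R$ and $\lcm(R)=\lm R$, under which the functor $\eta$ of the Theorem becomes the canonical functor $\lm R\to\ds(R)$.

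The second step would be to identify the functor $\lambda$ with the transpose functor. Since $d=0$, the defining assignment $\lambda=\syz^d\tr$ reduces to $\syz^0\tr=\tr$, as the zeroth syzygy acts as the identity on $\lm R$ (we continue to omit free summands). Hence $\lambda:\lcm(R)\to\lcm(R)$ is precisely the transpose functor $\tr:\lm R\to\lm R$.

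With these two reductions in place, the phrase ``the transpose functor $\tr$ extends to $\ds(R)$'' means exactly that there is a functor $\phi:\ds(R)\to\ds(R)$ for which $\phi\eta$ and $\eta\tr$ are isomorphic, that is, condition (3) of the Theorem with $\lambda$ rewritten as $\tr$. Invoking the equivalence (1) $\Leftrightarrow$ (3) of the Theorem then yields that $R$ is Gorenstein if and only if $\tr$ extends to $\ds(R)$, which is the assertion of the Corollary. I do not anticipate a substantive obstacle here; the only point requiring care is pinning down the meaning of ``extends'' as the commutativity up to isomorphism of the square in the Theorem, after which the Corollary is essentially a literal restatement of (1) $\Leftrightarrow$ (3) under the reductions $\cm(R)=\mod R$ and $\lambda=\tr$.
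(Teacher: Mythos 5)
Your proposal is correct and is exactly the argument the paper intends: the Corollary is stated as a direct consequence of the Theorem, obtained by specializing to $d=0$, where $\lcm(R)=\lm R$, $\lambda=\syz^0\tr=\tr$ (omitting free summands), and ``extends'' means condition (3), so the Corollary is the equivalence (1) $\Leftrightarrow$ (3) in this case.
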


%%%%%%%%%%%%%%%%%%%%%%%%%%%%%%%%%%%%%%%%%%%%%%%%%%%%%%%%

%%%%%%%%%%%%%%%%%%%%%%%%%%%%%%%%%%%%%%%%%%%%%%%%%%%%%%%%
\end{document}